\newtheorem{theorem}{Theorem}
\newtheorem{lemma}[theorem]{Lemma}
\newtheorem{corollary}[theorem]{Corollary}
\newtheorem{proposition}[theorem]{Proposition}
\newtheorem*{conjecture}{Conjecture}
\newtheorem*{remark}{Remark}
\crefname{theorem}{Theorem}{Theorems}
\crefname{lemma}{Lemma}{Lemmas}
\crefname{corollary}{Corollary}{Corollaries}
\crefname{proposition}{Proposition}{Propositions}
\crefname{conjecture}{Conjecture}{Conjectures}
\title{Ramified cover of varieties with nef cotangent bundle}
\author{
    {\sc Yiyu Wang}\thanks{Department of Mathematics, University of Wisconsin - Madison, 480 Lincoln Drive, 213 Van Vleck Hall, Madison, WI 53706. Email:\texttt{yiyu.wang@wisc.edu}}
}
\date{\today}
\begin{document}
\maketitle
\begin{abstract}
    We construct examples to show that having nef cotangent bundle is not preserved under finite ramified covers. Our examples also show that a projective manifold with Stein universal cover may not have nef cotangent bundle, disproving a conjecture of Liu-Maxim-Wang\cite{LiuMaximWang+2021}.
\end{abstract}

\section{Introduction}
 The notion of nefness of the cotangent bundle is closely related to various notions of hyperbolicity of a projective manifold (or more precisely, non-ellipticness), and it entails some topological obstructions. For example, any perverse sheaf on a projective manifold with nef cotangent bundle has nonnegative Euler characteristic \cite[Proposition 3.6]{LiuMaximWang+2021}. In \cite{LiuMaximWang+2021}, inspired by the Singer-Hopf conjecture and the Shafarevich conjecture, the authors made the following conjecture.
 
\begin{conjecture}(\cite[Conjecture 6.3]{LiuMaximWang+2021})\label{Main_Conj}
    Let $Y$ be a projective manifold. If the universal cover of $Y$ is a Stein manifold, then the cotangent bundle of $Y$ is nef.
\end{conjecture}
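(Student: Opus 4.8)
The plan is to read both hypotheses as statements about the cotangent sheaf and to bridge them through the universal covering map. Write $\pi \colon \tilde Y \to Y$ for the universal cover, which is \'etale, so that $\Omega^1_{\tilde Y} = \pi^* \Omega^1_Y$. Since $\tilde Y$ is Stein, the Remmert embedding theorem gives a proper holomorphic embedding $F = (f_1,\dots,f_N) \colon \tilde Y \hookrightarrow \mathbb{C}^N$, and equivalently (Cartan's Theorem A) $\Omega^1_{\tilde Y}$ is globally generated by the differentials $df_i$. In either formulation the differential $dF$ is everywhere injective, so dually the tangent bundle is realized as a subbundle of a trivial bundle, $T_{\tilde Y} \hookrightarrow \mathcal{O}_{\tilde Y}^{\oplus N}$. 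Thus the Stein hypothesis is converted into the concrete statement that, on the universal cover, the tangent bundle sits inside a trivial bundle.

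Next I would put nefness into a dual, curve-by-curve form suited to a pullback argument. By the characterization of nef bundles via the tautological quotient on $\mathbb{P}(\Omega^1_Y)$, the cotangent bundle fails to be nef precisely when there is a smooth projective curve $C$, a nonconstant morphism $f \colon C \to Y$, and a line subbundle $M \subset f^* T_Y$ with $\deg_C M > 0$; the goal becomes to exclude such a positive subbundle. Forming the fiber product $\tilde C = C \times_Y \tilde Y$ produces an infinite \'etale cover of $C$ with a lift $\tilde f \colon \tilde C \to \tilde Y$, and pulling back $T_{\tilde Y} \hookrightarrow \mathcal{O}^{\oplus N}$ along $\tilde f$ embeds the pulled-back line bundle $\tilde M$ into a trivial bundle $\mathcal{O}_{\tilde C}^{\oplus N}$. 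On a compact curve a line subbundle of a trivial bundle has nonpositive degree, so if $\tilde C$ were compact the positivity $\deg_C M > 0$ would immediately contradict the triviality coming from the Stein structure, and we would be done.

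The main obstacle, and the step on which the whole argument turns, is that $\tilde C$ is noncompact: the lift of a compact curve to an infinite \'etale cover is a noncompact Riemann surface, where degree arguments no longer apply directly. Concretely, the inclusion $\tilde M \hookrightarrow \mathcal{O}_{\tilde C}^{\oplus N}$ yields $N$ global holomorphic sections of $\tilde M^\vee$ with no common zero, and one would like to descend them to sections of $M^\vee$ on $C$ so as to contradict $\deg_C M^\vee < 0$. But these sections are assembled from the Stein coordinates $f_i$ on the universal cover, which are not pulled back from $Y$ and carry no invariance under the deck group $\Gamma$, so there is no reason for them, or any combination, to descend or even to stay bounded along the fibers of $\tilde C \to C$. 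The natural remedy is an averaging or $L^2$ procedure over $\Gamma$ producing a $\Gamma$-invariant, hence descending, object; this is exactly what works when the universal cover is a bounded domain, where an invariant Bergman metric and a bounded plurisubharmonic exhaustion are available. The difficulty is that "Stein" is strictly weaker than "bounded domain": a Stein cover such as $\mathbb{C}^n$ carries unbounded holomorphic functions and no invariant summability, so the averaging step has nothing to act on. I therefore expect the crux of any proof to be supplying a substitute for this missing boundedness, namely uniform control on a $\Gamma$-equivariant exhaustion of $\tilde Y$, and I would concentrate the effort there, since it is precisely the point at which global generation on the cover could fail to propagate to the numerical positivity that nefness demands on the base.
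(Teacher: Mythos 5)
There is a fundamental problem here that no amount of effort on the step you flagged can repair: the statement you were asked to prove is a \emph{conjecture} that this paper exists to \emph{disprove}. The paper's main corollary constructs a smooth projective variety $X$ whose universal cover is Stein but whose cotangent bundle is not nef, so the implication is simply false. Concretely: take $Y$ an abelian variety of dimension $n\geq 2$, $H$ very ample on $Y$, $D\in|2H|$ a smooth divisor, and let $f:X\to Y$ be the cyclic double cover branched over $D$. By a lemma of Sommese, if $\Omega^1_X$ were nef then the ramification divisor $R$ (which maps isomorphically to $D$) would have nef conormal bundle, forcing $R\cdot C\leq 0$ for every curve $C\subset R$; but the computation $R\cdot f^*C = 2D\cdot C = 4H\cdot C>0$ gives a contradiction, so $\Omega^1_X$ is not nef. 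Meanwhile the universal cover of $X$ \emph{is} Stein: pulling back the cover $\mathbb{C}^n\to Y$ gives an infinite unramified cover $X'\to X$ with $X'\to\mathbb{C}^n$ finite, hence $X'$ is Stein by Grauert, and any unramified cover of a Stein manifold is Stein. So your proof attempt could never have been completed.

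That said, your analysis locates the failure point with real precision, and it is worth seeing how the counterexample exploits exactly the gap you identified. Your reduction is sound up to the crux: non-nefness is witnessed by a curve $C$ and a positive-degree line subbundle $M\subset f^*T_Y$, and lifting to the universal cover embeds the pullback of $M$ into a trivial bundle --- but only over the \emph{noncompact} curve $\tilde C$, where degree arguments say nothing. You then observed that the missing ingredient is an averaging or $L^2$ argument over the deck group, which exists when the cover is a \emph{bounded} domain but has ``nothing to act on'' when the cover is unbounded, like $\mathbb{C}^n$. This is precisely the paper's point: Kratz's theorem shows compact quotients of bounded Stein domains do have nef cotangent bundle, and the counterexample (whose universal cover is an unbounded Stein manifold finite over $\mathbb{C}^n$) shows boundedness is necessary. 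In the example, the positive subbundle of $T_X$ lives along a curve $C\subset R$ (the kernel of $df$, isomorphic to $\mathcal{O}_R(R)$, which is ample on $R\cong D$), and $C$ lifts to a noncompact curve in the Stein cover exactly as you feared --- so the obstruction you named is not a technical inconvenience but the actual mechanism by which the conjecture fails. The correct lesson is that when a claimed ``proof'' funnels all difficulty into one irreducible step with no available tool, one should seriously entertain the possibility that the statement is false and try to build a counterexample out of that very obstruction.
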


Since having Stein universal cover is preserved under finite ramified covering, one may expect the property of having nef cotangent bundle is also preserved under finite ramified covering. However, we show that it is not the case in this paper.
\begin{theorem}\label{Main_Theorem}
    For any positive integer $n\geq2$, there exists a finite surjective map between projective smooth $n$-folds $f: X\to Y$, such that $Y$ has nef cotangent bundle, while $X$ does not.
\end{theorem}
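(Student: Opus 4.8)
The plan is to realize $X$ as a ramified double cover of a carefully chosen base $Y$ and to detect the failure of nefness not through a rational curve, which $X$ cannot contain, but through a negative quotient line bundle of $\Omega^1_X$ concentrated along the ramification locus. For the base I would take any smooth projective $n$-fold $Y$ with nef cotangent bundle that also carries an ample line bundle; the cleanest choice is a product $Y = C_1 \times \dots \times C_n$ of smooth curves of genus $\geq 1$, where $\Omega^1_Y = \bigoplus_i \mathrm{pr}_i^* \Omega^1_{C_i}$ is a direct sum of pullbacks of nef line bundles and is therefore nef. I would then fix an ample $L$ with $2L$ very ample and choose a general (hence smooth, by Bertini) divisor $B \in |2L|$, which I may also take to be ample. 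The double cover $f\colon X \to Y$ branched along $B$ is a smooth projective $n$-fold with $f$ finite surjective of degree $2$; write $\iota$ for the deck involution, and $R = \mathrm{Fix}(\iota)$ for the ramification divisor, so that $f|_R\colon R \to B$ is an isomorphism and $R$ is smooth of dimension $n-1$.

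The heart of the argument is the behaviour of $\Omega^1_X$ along $R$. Because $\iota$ fixes $R$ pointwise, $d\iota$ acts on $T_X|_R$ as an involution whose $(+1)$-eigenbundle is $T_R$ and whose $(-1)$-eigenbundle is the normal line bundle $N_{R/X}$, giving an $\iota$-eigenspace splitting
\[
\Omega^1_X|_R \;\cong\; \Omega^1_R \oplus N^*_{R/X}.
\]
In particular $N^*_{R/X} = \mathcal{O}_R(-R)$ appears as a quotient line bundle of $\Omega^1_X|_R$. For any curve $\gamma \subset R$ I would then compute, using $f^*B = 2R$ and the projection formula, that $R\cdot\gamma = \tfrac12\,B\cdot f_*\gamma > 0$, since $B$ is ample and $f_*\gamma$ is a nonzero effective $1$-cycle. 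Restricting the quotient $\Omega^1_X|_R \twoheadrightarrow N^*_{R/X}$ to $\gamma$ produces a quotient line bundle of $\Omega^1_X|_\gamma$ of degree $-R\cdot\gamma < 0$, so $\Omega^1_X$ is not nef. Such a $\gamma$ exists exactly because $\dim R = n-1 \geq 1$, which is precisely where the hypothesis $n\geq 2$ is used; for $n=1$ the locus $R$ is a finite set of points and the obstruction disappears, consistent with the fact that the property is preserved under ramified covers of curves.

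The main obstacle is locating the right obstruction rather than carrying out the computation. Since $Y$ has nef cotangent bundle it contains no rational curves, and hence neither does $X$, as every curve in $X$ maps finitely onto a curve of geometric genus $\geq 1$; thus the usual witness to non-nefness is unavailable, and moreover the failure is invisible to $\det\Omega^1_X = K_X$, because the offending bundle $\Omega^1_X|_\gamma$ may well have nonnegative total degree while still admitting a negative quotient. The key realization is that the ramification forces the conormal bundle $N^*_{R/X}$ to split off as a direct summand, hence a quotient, of $\Omega^1_X$ along $R$, and that this summand is negative because $R$ is, up to the factor $\tfrac12$, the pullback of an ample divisor. I would finally observe that the construction above already works uniformly for all $n\geq 2$; alternatively one can treat the surface case first and then pass to $X\times A \to Y\times A$ for a fixed abelian variety $A$ of dimension $n-2$, using that $\Omega^1$ of a product is the direct sum of the pullbacks and that $E\oplus\mathcal{O}^{\oplus k}$ is nef if and only if $E$ is.
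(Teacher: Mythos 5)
Your proposal is correct and follows essentially the same route as the paper: a double cover branched along a smooth divisor in twice an ample class, with non-nefness detected by the conormal bundle $\mathcal{O}_R(-R)$ of the ramification divisor occurring as a quotient of $\Omega^1_X|_R$ yet having positive intersection ($R\cdot\gamma = \tfrac12 B\cdot f_*\gamma > 0$) with curves $\gamma \subset R$. The only difference is in how that quotient is produced --- you use the $\pm1$-eigenspace splitting of $\Omega^1_X|_R$ under the deck involution, while the paper invokes Sommese's lemma via the relative cotangent sheaf $\Omega^1_f$ and a local coordinate computation --- and the paper's own remark points out that its argument amounts to exactly the splitting you describe, so the two derivations coincide.
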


As a corollary, we give a counterexample to the conjecture above.

\begin{corollary}\label{Main_Corollary}
    There is a smooth projective variety $X$ whose universal cover is Stein while its cotangent bundle is not nef.
\end{corollary}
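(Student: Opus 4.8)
The plan is to take $X$ to be exactly the source of the finite surjective map $f\colon X\to Y$ produced in \Cref{Main_Theorem}. By that theorem $X$ is a smooth projective variety whose cotangent bundle is not nef, so the only thing left to verify is that the universal cover $\widetilde{X}$ of $X$ is Stein. Since $Y$ is the base of this cover and is built to have nef cotangent bundle, I would first record that the particular $Y$ arising in the construction has Stein universal cover $\widetilde{Y}$; in the models relevant here $Y$ is a smooth (sub)variety associated to abelian varieties, so $\widetilde{Y}$ embeds as a closed complex submanifold of some $\mathbb{C}^{N}$ and is therefore Stein. The whole corollary then reduces to the general principle, already highlighted in the introduction, that having Stein universal cover is inherited by finite ramified covers.

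To make that principle precise I would argue as follows. Let $p\colon \widetilde{Y}\to Y$ denote the universal cover and form the fiber product $Z:=X\times_{Y}\widetilde{Y}$, with its two projections $q\colon Z\to\widetilde{Y}$ and $r\colon Z\to X$. Because $p$ is \'etale and $X$ is smooth, $Z$ is a smooth complex manifold, $q$ is finite (it is the base change of the finite map $f$), and $r$ is \'etale (it is the base change of the \'etale map $p$). A finite holomorphic map onto a Stein space has Stein total space, since holomorphic convexity and holomorphic separability pull back along finite proper maps; hence $Z$ is Stein, and so is every connected component $Z_{0}$ of $Z$.

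Finally I would run the covering-space comparison. The map $r\colon Z\to X$ is an unramified cover, so restricting to a component gives an unramified cover $Z_{0}\to X$; consequently the universal cover $\widetilde{X}\to X$ factors as $\widetilde{X}\to Z_{0}\to X$, and $\widetilde{X}\to Z_{0}$ is itself the universal cover of $Z_{0}$. By Stein's theorem that every unramified covering of a Stein manifold is again Stein, $\widetilde{X}$ is Stein. Combined with the non-nefness of the cotangent bundle from \Cref{Main_Theorem}, this exhibits $X$ as the desired counterexample.

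I expect the main obstacle to lie not in the formal deduction but in pinning down that the specific $Y$ coming out of \Cref{Main_Theorem} genuinely has Stein universal cover, and in handling the interaction between the ramification of $q$ and the \'etale tower: one must be careful that $Z$ is normal (here guaranteed by smoothness) before invoking the ``finite over Stein is Stein'' statement, and that the purely topological factorization of $\widetilde{X}$ through $Z_{0}$ is compatible with the holomorphic structures so that Stein's covering theorem applies.
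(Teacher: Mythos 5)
Your proposal is correct and follows essentially the same route as the paper: the paper also specializes $Y$ to an abelian variety, forms the fiber product of $X$ with the universal cover $\mathbb{C}^n\to Y$, invokes Grauert's theorem that a space finite over a Stein space is Stein, and then concludes via Stein's theorem that unramified covers of Stein manifolds are Stein. Your extra care about passing to a connected component of the fiber product is a harmless refinement of the same argument.
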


By a theorem of Kratz \cite[Theorem 2]{kratz1997compact}, any compact quotient of bounded domain in $\mathbb{C}^n$ (or any Stein manifold) has nef cotangent bundle. \cref{Main_Theorem} shows that boundedness in Kratz's theorem is necessary.

There are other hyperbolic-type properties that are preserved under finite surjective morphism, for example the property of having large fundamental group. A normal variety is said to have \emph{large fundamental group}, if its universal cover does not contain any positive dimensional proper complex subspaces (see for example \cite{Kollar1993}). If a variety has Stein universal cover, then it has large fundamental group (\cite[Proposition 6.7]{LiuMaximWang+2021}). Having large fundamental group is also related to various notions of hyperbolicity, as the Shafarevich conjecture predicts varieties with nonpositive sectional curvature have Stein universal covers (also see \cite{LiuMaximWang+2021}).

\begin{corollary}\label{Corollary_large_fundamental_group}
    There is a projective variety $X$ which has large fundamental group while its cotangent bundle is not nef.
\end{corollary}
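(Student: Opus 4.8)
The plan is to take for $X$ the very manifold produced in \cref{Main_Corollary}: a smooth projective variety whose universal cover $\widetilde{X}$ is Stein but whose cotangent bundle is not nef. With this choice the non-nefness is already built in, so the only assertion that still requires proof is that $X$ has large fundamental group. This is a direct consequence of the implication recorded in the introduction, namely \cite[Proposition 6.7]{LiuMaximWang+2021}: a variety with Stein universal cover has large fundamental group. Thus the entire statement reduces to quoting \cref{Main_Corollary} together with this implication.

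If one wishes to see the largeness more explicitly, I would argue as follows. Let $\widetilde{X}$ be the universal cover, which is Stein by hypothesis. Any connected compact analytic subset $Z \subseteq \widetilde{X}$ is contained in a Stein manifold, on which every holomorphic function restricts to a function that, by the maximum principle, is constant on $Z$; since holomorphic functions separate points on a Stein manifold, $Z$ must be a single point. Hence $\widetilde{X}$ contains no positive-dimensional compact complex subspaces, which is exactly the defining condition for $X$ to have large fundamental group. Combining this with the failure of nefness supplied by \cref{Main_Corollary} yields the claimed example.

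I do not expect any genuine obstacle in this step: \cref{Corollary_large_fundamental_group} is a formal repackaging of \cref{Main_Corollary} through the chain \emph{Stein universal cover} $\Rightarrow$ \emph{large fundamental group}. All of the substantive difficulty lives upstream, in the construction of the ramified cover $f\colon X \to Y$ of \cref{Main_Theorem} (arranging $Y$ to have nef cotangent bundle, $X$ not, while keeping the universal cover Stein). Once that is in hand, the present corollary costs nothing beyond invoking \cite[Proposition 6.7]{LiuMaximWang+2021}.
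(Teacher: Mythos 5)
Your proof is correct, but it takes a genuinely different route from the paper's. You factor the corollary through \cref{Main_Corollary}: the example $X$ there has Stein universal cover, and Steinness forces largeness of the fundamental group (via \cite[Proposition 6.7]{LiuMaximWang+2021}, or your direct argument that a Stein manifold contains no positive-dimensional compact analytic subsets --- which is a correct and complete justification). The paper instead never touches the Stein property here: it uses the equivalent characterization of large fundamental group as ``the image of $\pi_1(W)\to\pi_1(X)$ is infinite for every positive-dimensional cycle $w:W\to X$,'' observes that this property is manifestly inherited under a finite surjective morphism $f:X\to Y$ (since $\pi_1(W)\to\pi_1(Y)$ factors through $\pi_1(X)$), and transfers largeness from the abelian variety $Y$ up to $X$. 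The trade-off: your route leans on the analytic machinery already invested in \cref{Main_Corollary} (Grauert's and Stein's theorems), so it costs nothing extra but only applies to the abelian-variety instance of the construction; the paper's route is purely group-theoretic, independent of \cref{Main_Corollary}, and would apply verbatim to any base $Y$ with large fundamental group, whether or not its universal cover is Stein. Both arguments land on the same example and both are valid.
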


The example in \cref{Main_Theorem} is constructed using the cyclic covering trick. We produce ramified covering map with smooth branched locus. Using a lemma of Sommese, we prove the ramified cover cannot have nef cotangent bundle. 

\subsubsection*{Acknowledgements.} The author would like to thank his advisor Botong Wang for suggesting this problem and helpful discussions. He also would like to thank Conner Simpson for polishing the first draft.

\section{A Lemma of Sommese}

A line bundle $L$ on a projective variety is called \textit{nef} if for every irreducible curve $C$, we have $L\cdot C\geq 0$. A vector bundle $E$ is called \textit{nef} if the tautological line bundle $\mathcal{O}_{\mathbf{P}(E)}(1)$ is nef on $\mathbf{P}(E)$.

The next lemma is due to Sommese \cite[Lemma 1.9]{Sommese1984}. Sommese used this lemma to classify all Hirzebruch surfaces with ample cotangent bundle.

\begin{lemma}\label{rami_locus_negative}
    Suppose $f:X\to Y$ is a double covering between $n$-dimensional smooth projective varieties over $\mathbb{C}$, $n\geq 2$, with smooth ramification locus $R\subset X$ and branched locus $B \subset Y$. Suppose furthermore at any point $p\in R$, $f$ can be written (analytical) locally as $(u_1,u_2,\cdots, u_n) \to (u_1^2,u_2,\cdots, u_n)$. If $X$ has nef cotangent bundle, then $R$ has nef conormal bundle.
\end{lemma}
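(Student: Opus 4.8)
The plan is to use the deck involution of the double cover to show that the conormal bundle $N_{R/X}^{*}$ is not merely a sub-object but in fact a direct summand, hence a quotient, of $\Omega^1_X|_R$. Since $\Omega^1_X$ is nef by hypothesis and nefness is preserved under restriction to subvarieties, the restriction $\Omega^1_X|_R$ is nef; and since nefness passes to quotient bundles, realizing $N_{R/X}^{*}$ as such a quotient will immediately give the claim.

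First I would write down the conormal exact sequence of the smooth divisor $R\subset X$,
\[
0 \longrightarrow N_{R/X}^{*} \longrightarrow \Omega^1_X|_R \longrightarrow \Omega^1_R \longrightarrow 0 .
\]
Here $N_{R/X}^{*}=\mathcal{I}_R/\mathcal{I}_R^2$ is a line bundle, as $R$ has codimension one, and it sits inside $\Omega^1_X|_R$ as a sub-bundle. A sub-bundle of a nef bundle need not be nef, so this sequence alone is insufficient; this is exactly the gap that the double-cover structure must fill.

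The key step is to bring in the involution $\sigma\colon X\to X$ exchanging the two sheets of $f$, whose fixed locus is precisely $R$. In the given local coordinates $\sigma$ is $(u_1,u_2,\dots,u_n)\mapsto(-u_1,u_2,\dots,u_n)$, so $\sigma$ fixes $R$ pointwise and acts on the fibers of $\Omega^1_X|_R$ as a fiberwise involution. Working over $\mathbb{C}$, I would decompose $\Omega^1_X|_R$ into the $(+1)$- and $(-1)$-eigenbundles of $\sigma$; the ranks of the eigenspaces are constant along $R$, equal to $n-1$ and $1$ respectively, as one reads off from the local model $d\sigma(du_1)=-du_1$ and $d\sigma(du_i)=du_i$ for $i\geq 2$, so this is a genuine splitting of vector bundles. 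Comparing with the local generators shows that the $(-1)$-eigenbundle is generated by $du_1$ and hence equals $N_{R/X}^{*}$, while the $(+1)$-eigenbundle maps isomorphically onto $\Omega^1_R$. Thus the conormal sequence splits $\sigma$-equivariantly as $\Omega^1_X|_R=\Omega^1_R\oplus N_{R/X}^{*}$.

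With the splitting in hand the conclusion is immediate: the projection onto the $(-1)$-eigenbundle is a surjection of vector bundles $\Omega^1_X|_R\to N_{R/X}^{*}$, exhibiting $N_{R/X}^{*}$ as a quotient of the nef bundle $\Omega^1_X|_R$, and a quotient of a nef bundle is nef. I expect the only delicate points to be checking that the eigenbundle decomposition is honestly locally free, which follows from constancy of the eigenspace ranks in characteristic zero, and the identification of the $(-1)$-eigenbundle with the conormal bundle. The genuine insight is the realization that the involution upgrades $N_{R/X}^{*}$ from a sub-bundle, where nefness cannot be inferred, to a quotient, where it can.
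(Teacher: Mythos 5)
Your proof is correct, but it reaches the key quotient by a genuinely different mechanism than the paper. The paper starts from the relative cotangent sequence $f^*\Omega_Y^1 \to \Omega_X^1 \to \Omega_f^1 \to 0$, restricts it to $R$, and shows by a local computation that $\Omega_f^1|_R \simeq \mathcal{O}_R(-R)$; this exhibits the conormal bundle as a quotient of $\Omega_X^1|_R$ without ever invoking the deck transformation. You instead split the conormal sequence via the eigenbundle decomposition of $\Omega_X^1|_R$ under the involution $\sigma$. In fact the two constructions yield the \emph{same} splitting: in the local coordinates, the image of $f^*\Omega_Y^1|_R$ in $\Omega_X^1|_R$ is spanned by $du_2,\dots,du_n$, which is exactly your $(+1)$-eigenbundle, and the paper's own remark after the proof observes that what is really being proved is that the conormal sequence splits. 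What the paper's route buys: it needs no group action, so it applies verbatim to covers with the same local form even when no global involution is available, and (as the remark notes) the formal-local computation works in every characteristic, whereas your eigenbundle decomposition, via the projectors $\frac{1}{2}(1\pm\sigma^*)$, requires $2$ to be invertible and so fails in characteristic $2$. What your route buys: the splitting is canonical and conceptually transparent, and you avoid introducing relative differentials at all. Two points you assert without justification, each worth a line: (i) the deck involution exists as a regular automorphism of $X$ --- standard in characteristic $0$, since the degree-$2$ extension $K(X)/K(Y)$ is Galois and its involution acts on $X$ because $X$, being normal and finite over $Y$, is the normalization of $Y$ in $K(X)$; (ii) $\sigma$ has the stated local form $(u_1,u')\mapsto(-u_1,u')$, which follows from $f\circ\sigma=f$ together with the fact that $\sigma$ is not the identity on any neighborhood of a point of $R$.
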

\begin{proof}
    Consider the natural exact sequence
    \begin{equation*}
        f^* \Omega_Y^1 \to \Omega_X^1 \to \Omega_f^1 \to 0,
    \end{equation*}
    and restrict it to $R$,
    \begin{equation*}
        \Omega_X^1|_R \to \Omega_f^1|_R \to 0,
    \end{equation*}
    We claim that $\Omega_f^1|_R\simeq \mathcal{O}_R(-R)$. Assuming this is true, since $\Omega_X^1|_R$ is nef and any quotient bundle of a nef bundle is also nef, we must have $\mathcal{O}_R(-R)$ is nef.
    
    To prove the fact $\Omega_f^1|_R\simeq \mathcal{O}_R(-R)$, consider the natural short exact sequence
    \begin{equation}\label{fund_ses}
        0\to \mathcal{I}_R/\mathcal{I}_R^2 \to \Omega_X^1|_R \to \Omega_R^1\to 0.
    \end{equation}
    Let $\varphi$ be the composition $\mathcal{O}_R(-R)\simeq \mathcal{I}_R/\mathcal{I}_R^2 \to \Omega_X^1|_R \to \Omega_f^1|_R$, and we want to prove $\varphi$ is an isomorphism. This can be proved locally. In the coordinate patch $U$ that satisfies the second condition of this lemma, we know that $\Omega_f^1$ can be locally written as
    \begin{equation*}
        \frac{\bigoplus_{i=1}^n \mathcal O_U\{du_i\}}{\mathcal O_U\{u_1du_1\}\bigoplus_{i=2}^n \mathcal O_U\{du_i\}} \simeq \mathcal{O}_U/u_1\mathcal{O}_U\{du_1\},
    \end{equation*}
    which is a line bundle over $R\cap U$, generated by $du$. Moreover, the equivalence class of $u$ which generates $\mathcal{I}_R/\mathcal{I}_R^2$ maps to the class of $du$, which generates $\Omega_f^1|_R$. So $\varphi$ must be an isomorphism.
\end{proof}

\begin{corollary}\label{negative_intersection}
    Under the assumption of the above lemma, for any irreducible curve $C\subset R$, we must have $C\cdot R \leq 0$. In particular, if $X$ is of dimension 2, then $R\cdot R\leq 0$.
\end{corollary}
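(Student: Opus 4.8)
The plan is to derive this as an essentially formal consequence of the nefness of the conormal bundle just established in \cref{rami_locus_negative}, using only the definition of nef and the projection formula. The key observation is that, since $f$ is a double cover with smooth ramification locus satisfying the stated local normal form, $R$ is a smooth divisor in $X$; hence its conormal bundle is the \emph{line} bundle $\mathcal{I}_R/\mathcal{I}_R^2\simeq \mathcal{O}_R(-R)$, which is precisely the object \cref{rami_locus_negative} asserts to be nef.

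First I would fix an irreducible curve $C\subset R$ and apply the definition of a nef line bundle to $L=\mathcal{O}_R(-R)$: by \cref{rami_locus_negative} this bundle is nef, so $\mathcal{O}_R(-R)\cdot C\geq 0$. Next I would rewrite this degree as an intersection number computed in $X$. Since $\mathcal{O}_R(-R)=\bigl(\mathcal{O}_X(R)|_R\bigr)^{-1}$, the projection formula gives
\begin{equation*}
    \mathcal{O}_R(-R)\cdot C=\deg_C\bigl(\mathcal{O}_X(-R)|_C\bigr)=-\,\mathcal{O}_X(R)\cdot C=-\,(R\cdot C),
\end{equation*}
where the right-hand intersection is taken in $X$. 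Combining with the inequality above yields $R\cdot C\leq 0$, which is the first claim.

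For the case $\dim X=2$, the divisor $R$ is a (possibly reducible) smooth curve, so I would write $R=\sum_i R_i$ as a disjoint union of its smooth irreducible components. Applying the inequality just proved with $C=R_i$ gives $R\cdot R_i\leq 0$ for each $i$; since the components are pairwise disjoint by smoothness, $R_i\cdot R_j=0$ for $i\neq j$, hence $R\cdot R_i=R_i\cdot R_i$. Summing over $i$ then gives $R\cdot R=\sum_i R_i\cdot R_i\leq 0$, as desired. I do not anticipate a genuine obstacle here: the content is entirely carried by \cref{rami_locus_negative}, and the only points requiring a little care are the correct sign in the identification $\mathcal{O}_R(-R)\cdot C=-(R\cdot C)$ and the reduction to irreducible components in the surface case, both of which are routine once the conormal bundle is known to be nef.
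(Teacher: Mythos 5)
Your proof is correct and takes essentially the same approach as the paper: both deduce $R\cdot C=\deg_C\bigl(\mathcal{O}_R(R)\bigr)\leq 0$ directly from the nefness of the conormal bundle $\mathcal{O}_R(-R)$ established in \cref{rami_locus_negative}. Your extra treatment of the surface case is fine (the paper leaves it implicit), though the disjointness of components is not actually needed there—summing $R\cdot R_i\leq 0$ over the components already gives $R\cdot R\leq 0$.
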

\begin{proof}
    For such a curve $C$, $R\cdot C=\mathrm{deg}_C(\mathcal{O}_X(R))=\mathrm{deg}_C(\mathcal{O}_R(R))\leq 0$ by $\mathcal{O}_R(-R)$ is nef.
\end{proof}

\begin{remark}
    What we actually proved here is the short exact sequence (\ref{fund_ses}) splits. This is clear geometrically: the tangent bundle $TX$ of $X$ restricted to $R$ has two natural subbundles, one is the tangent bundle of $R$, and the other is the kernel bundle of $df|_R:TX|_R\to TY|_B$. Moreover, the assumption ``analytic locally'' can be replaced by ``formal locally", since for checking $a$ is an isomorphism, we only need to verify it stalk by stalk formally. Therefore the argument works in all characteristics. 
\end{remark}

\section{The Cyclic Covering}
We recall the construction of cyclic covering.
\begin{proposition}[{\cite[Proposition 4.1.6]{Lazarsfeld2004}}]
Let $Y$ be an algebraic variety, and $L$ a line bundle on $Y$. Suppose given a positive integer $m \geq 1$ plus a non-zero section $$s \in \Gamma(Y,L^m)$$ defining a divisor $D \subset Y$. Then there exists a finite flat covering $\pi : X \to Y$, where $X$ is a scheme having the property that the pullback $L'= \pi^{*} L$ of $L$ carries a section $s'\in \Gamma(X,L')$ with $(s')^m = \pi^*s$. The divisor $D'= div(s')$ maps isomorphically to $D$. Moreover, if $Y$ and $D$ are non-singular, then so too are $X$ and $D'$.
\end{proposition}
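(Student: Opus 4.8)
The plan is to realize $X$ explicitly as a hypersurface inside the total space of $L$, which reduces every assertion to a single local computation. Let $p : \mathbb{L} \to Y$ denote the total space of $L$, equivalently $\mathbb{L} = \operatorname{Spec}_Y\big(\bigoplus_{k\geq 0} L^{-k}\big)$, and let $t \in \Gamma(\mathbb{L}, p^*L)$ be the tautological section. Then $t^m - p^*s$ is a global section of $p^*L^m$, and I would define $X \subset \mathbb{L}$ to be its zero scheme, with $\pi := p|_X$ and $s' := t|_X \in \Gamma(X, \pi^*L)$. By construction $(s')^m = \pi^* s$, so the only real work is to verify the structural claims: finiteness, flatness, the isomorphism $D' \cong D$, and smoothness, all of which can be checked after trivializing $L$.

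Concretely, over an open set $U$ on which $L$ is trivial, write $s$ as a regular function $g \in \mathcal{O}(U)$ cutting out $D \cap U$, and let $z$ be the fiber coordinate on $\mathbb{L}|_U \cong U \times \mathbb{A}^1$ corresponding to $t$. Then $X|_U = \operatorname{Spec}\big(\mathcal{O}(U)[z]/(z^m - g)\big)$. Since $z^m - g$ is monic in $z$, the coordinate ring is free of rank $m$ over $\mathcal{O}(U)$ with basis $1, z, \dots, z^{m-1}$, which shows at once that $\pi$ is finite and flat of degree $m$. To see that the local pieces glue without reference to a trivialization, I would observe that $X = \operatorname{Spec}_Y \mathcal{A}$ for the sheaf of algebras $\mathcal{A} = \bigoplus_{i=0}^{m-1} L^{-i}$, whose multiplication uses $s : L^{-m} \to \mathcal{O}_Y$ to fold top-degree products back down; local freeness of $\mathcal{A}$ of rank $m$ yields finiteness and flatness globally.

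For the divisor statement, $D' = \operatorname{div}(s')$ is cut out by $z$, so on $U$ it is $\operatorname{Spec}\big(\mathcal{O}(U)[z]/(z^m - g, z)\big) = \operatorname{Spec}\big(\mathcal{O}(U)/(g)\big) = D \cap U$, and $\pi$ restricts to the identity in these coordinates; hence $\pi$ maps $D'$ isomorphically onto $D$. In particular, if $D$ is nonsingular then so is $D'$.

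The one place demanding genuine care, and which I expect to be the main obstacle, is the smoothness of $X$. I would apply the Jacobian criterion to the hypersurface $\{z^m - g = 0\}$ inside the smooth ambient space $U \times \mathbb{A}^1$ (smooth because $Y$ is): a point of $X$ is singular exactly when $d(z^m - g) = m z^{m-1}\,dz - dg$ vanishes. Working over $\mathbb{C}$, the coefficient of $dz$ forces $z = 0$, whence $g = z^m = 0$ places the point on $D$, while the vanishing of $dg$ says precisely that $D = \{g = 0\}$ is singular there. Since $D$ is assumed nonsingular, no such point exists, so $X$ is smooth. The subtlety to flag is the use of $m \neq 0$ in $\mathbb{C}$ (in positive characteristic dividing $m$ this step, and indeed the conclusion, can fail) together with the need to separate the $dz$ and $dg$ directions, which is exactly what the ambient product structure $U \times \mathbb{A}^1$ provides.
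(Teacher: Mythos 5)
Your proposal is correct and takes essentially the same route as the paper and its cited source (Lazarsfeld, Proposition 4.1.6): the paper's local description of the cyclic cover as the hypersurface $t^m - s = 0$ in $U \times \mathbf{A}^1$ is exactly your construction, globalized via the total space of $L$. Your verification of finiteness and flatness (freeness of rank $m$), the identification $D' \cong D$, and the Jacobian-criterion smoothness argument (including the characteristic-zero caveat) are the standard details of that same proof, carried out correctly.
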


The construction of $X$ can be described in detail locally. On an affine open subset $U$ of $Y$ that $L^m$ is trivial, $s$ can be viewed as a regular function on $U$. Then $\pi^{-1}(U)\subset Y \times \mathbf{A}^1$ is defined by the equation $t^m-s=0$. In particular, if $m=2$, and $D$ is nonsingular, then the covering we construct here satisfies the second condition of \cref{rami_locus_negative}. The reason is that analytic locally, the map $\pi$ can be written as $(x_1,x_2,\cdots,x_n,t) \to (x_1,x_2,\cdots,x_n)$. Since $D$ is nonsingular, at any given point $p\in D$, there exists some $i$ such that $(x_1,x_2,\cdots,\widehat{x_i},\cdots,x_n,s)$ is a local coordinate of $Y$. Then $(x_1,\cdots,\widehat{x_i},\cdots,x_n,t) \to (x_1,\cdots,\widehat{x_i},\cdots,x_n,s)$ is the desired coordinate presentation of $\pi$.

Now, we can prove the claimed results in the introduction.
\begin{proof}[Proof of \cref{Main_Theorem}]
    We pick any smooth $n$-fold $Y$ with nef cotangent bundle, a very ample line bundle $H$, and any smooth section $D\in |2H|$. By the construction above, we get a double covering map $f:X\to Y$, branched over $D$, satisfying all the conditions of \cref{rami_locus_negative}. In particular, if $X$ has nef cotangent bundle, then for any irreducible curve $C\subset D$, $D'\cdot f^*C=2D'\cdot f^{-1}(C) \leq 0$ by \cref{negative_intersection} ($f^{-1}(C)$ is still irreducible since $f$ maps $D'$ isomorphically to $D$). On the other hand, $D'\cdot f^*C=2D\cdot C=4H\cdot C>0$ by the assumption $H$ is very ample. Therefore $X$ cannot have nef cotangent bundle.
\end{proof}

\begin{proof}[Proof of \cref{Main_Corollary}]
    Let $Y$ in the above construction be an abelian variety. Then $Y$ has universal cover $\mathbb{C}^n$. Taking the fiber product in the category of complex spaces, we have the Cartesian diagram
    \[
    \begin{tikzcd}
        X' \arrow[d] \arrow[r] & \mathbb{C}^n \arrow[d] \\
        X \arrow[r]            & Y                     
    \end{tikzcd}
    \]
    where $X'$ is an infinite unramified cover of $X$. Since $X \to Y$ is finite, $X'\to \mathbb{C}^n$ is also finite, by \cite[Theorem V.1.1]{Grauert2004} $X'$ is Stein. Therefore, the universal cover of $X$ is Stein because any unramified covering (not necessarily finite) of a Stein manifold is Stein (see \cite{Stein1956}). But the cotangent bundle of $X$ is not nef.
\end{proof}

\begin{proof}[Proof of \cref{Corollary_large_fundamental_group}]
    For a normal variety $X$, having large fundamental group is equal to say for any positive dimensional cycle $w:W\to X$, the image of $\pi_1(W)\to\pi_1(X)$ is infinite (see \cite{Kollar1993}). It's obvious that having large fundamental group is preserved under finite surjective morphism using this definition. Take $Y$ to be an abelian variety, $X$ as in the proof of \cref{Main_Theorem}, then $X$ has large fundamental group while its cotangent bundle is not nef.
\end{proof}

Kobayashi hyperbolicity is an important example of hyperbolicity of complex manifolds. If a projective variety has ample cotangent bundle, then it also has Kobayashi hyperbolicity (see \cite[Theorem 6.3.26]{Lazarsfeld2004_2}). However, the above construction shows

\begin{corollary}
    There is a projective variety $X$ which is Kobayashi hyperbolic while its cotangent bundle is not nef.
\end{corollary}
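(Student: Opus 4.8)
The plan is to reuse the construction from \cref{Main_Theorem} essentially verbatim, and then invoke Kobayashi hyperbolicity of the base $Y$ together with stability of hyperbolicity under finite (in fact arbitrary) covers. Concretely, I would take $Y$ to be an abelian variety only if that suffices, but abelian varieties are \emph{not} Kobayashi hyperbolic, so instead I would choose $Y$ to be a smooth projective variety that is \emph{both} Kobayashi hyperbolic \emph{and} has nef cotangent bundle; for instance a smooth surface (or higher-dimensional variety) with ample cotangent bundle, or a compact quotient of a bounded symmetric domain. Such a $Y$ has nef (indeed ample) cotangent bundle, and by \cite[Theorem 6.3.26]{Lazarsfeld2004_2} ample cotangent bundle forces Kobayashi hyperbolicity, so $Y$ simultaneously satisfies the hypotheses needed to run the cyclic covering trick and is Kobayashi hyperbolic.

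Having fixed such a $Y$, the first step is to produce the double cover $f:X\to Y$ exactly as in the proof of \cref{Main_Theorem}: pick a very ample $H$ and a smooth $D\in|2H|$, form the cyclic double cover, and conclude by \cref{rami_locus_negative} and \cref{negative_intersection} that $X$ is smooth projective but does \emph{not} have nef cotangent bundle. This part is already established and I would simply cite it. The second step is to argue that $X$ \emph{is} Kobayashi hyperbolic. The key point is that the Kobayashi pseudodistance does not increase under holomorphic maps, so a finite (or any) holomorphic covering map $f:X\to Y$ with $Y$ hyperbolic forces $X$ to be hyperbolic: given distinct points $x_1,x_2\in X$, if $d_X(x_1,x_2)=0$ were allowed one would need to find them in the same fiber, but a short argument using the distance-decreasing property and the local biholomorphism of the covering shows $d_X$ separates points whenever $d_Y$ does. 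I would state this as the standard fact that hyperbolicity is inherited by (unramified or finite) covers of hyperbolic manifolds; the ramified cover here is a finite holomorphic map onto a hyperbolic target, so $d_Y(f(x_1),f(x_2))\le d_X(x_1,x_2)$ gives hyperbolicity away from the ramification, and completeness/properness handles the points lying over the branch locus.

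The main obstacle I anticipate is the choice of $Y$: I must exhibit a single variety that is simultaneously Kobayashi hyperbolic and has nef cotangent bundle, and for which a smooth $D\in|2H|$ exists so the covering construction applies in the required dimension $n\ge2$. Varieties with ample cotangent bundle are the cleanest candidates since ampleness gives both hyperbolicity (via \cite{Lazarsfeld2004_2}) and nefness for free, and such examples exist in every dimension $n\ge2$ (e.g. generic complete intersections of high degree in $\mathbb{P}^N$, or products of compact hyperbolic curves need care since products of ample-cotangent varieties only give nef, not ample, but nef suffices for \cref{rami_locus_negative}). A subtle point to verify is that the distance-decreasing argument for the \emph{ramified} cover still delivers hyperbolicity of $X$: since $f$ is finite and surjective with $Y$ hyperbolic, I would either cite the general stability of hyperbolicity under finite surjective maps to a hyperbolic base, or note that $X\to Y$ is locally a biholomorphism off the ramification divisor and apply the standard extension of hyperbolicity across the nowhere-dense branch locus. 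Once these are in place the corollary follows immediately, as $X$ is hyperbolic yet has non-nef cotangent bundle.
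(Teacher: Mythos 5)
Your overall strategy matches the paper's: run the double-cover construction of \cref{Main_Theorem} over a Kobayashi hyperbolic base and transfer hyperbolicity up the finite map. Two points of divergence are worth flagging. First, you over-constrain the base: the part of the proof of \cref{Main_Theorem} showing that $X$ fails to have nef cotangent bundle uses only the smoothness of $D\in|2H|$ together with \cref{rami_locus_negative} and \cref{negative_intersection}; nefness of $\Omega_Y^1$ enters nowhere in that half of the argument. Accordingly the paper takes $Y$ to be \emph{any} projective Kobayashi hyperbolic manifold, with no hypothesis on its cotangent bundle, whereas you insist on a $Y$ that also has nef (or ample) cotangent bundle. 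This is harmless --- such $Y$ exist in every dimension, e.g.\ products of curves of genus at least $2$ --- but the extra condition does no work. Second, and more substantively, your proof that $X$ is hyperbolic is the weak link. The distance-decreasing inequality $d_Y(f(x_1),f(x_2))\le d_X(x_1,x_2)$ only shows that a pair of points with vanishing Kobayashi distance must lie in a common fiber; you still have to separate distinct points of a finite fiber, and your appeal to ``completeness/properness'' over the branch locus is not an argument (note that $f$ is \emph{not} a local biholomorphism at ramification points, so the covering-space reasoning does not apply precisely where it is needed). This can be repaired by citing the localization theorem for hyperbolicity of finite surjective maps onto hyperbolic spaces, but the paper's route is cleaner and self-contained: for any holomorphic $g:\mathbb{C}\to X$, the composite $f\circ g$ is constant because $Y$ is hyperbolic, so $g(\mathbb{C})$ lies in a finite fiber and $g$ is constant; since $X$ is compact, the absence of nonconstant entire curves gives Kobayashi hyperbolicity by Brody's theorem. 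I recommend replacing your metric argument with that one.
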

\begin{proof}
    Take $Y$ to be any projective Kobayashi hyperbolic variety in the above construction. Then $f:X\to Y$ is finite surjective but $X$ does not have nef cotangent bundle. The variety $X$ is also Kobayashi hyperbolic, because if there is any nonconstant holomorphic map $g:\mathbb{C} \to X$, then $f\circ g$ is constant since $Y$ is Kobayashi hyperbolic. Therefore $g(\mathbb{C})$ is contained in a fiber of $f$, which is a finite set, so $g$ must be constant, and $X$ is also Kobayashi hyperbolic.
\end{proof}

\begin{remark}
    Such an example in dimension 2 was known a long time ago (see for example \cite{10.2307/2041670}). 
\end{remark}

All these examples show that, the nefness of cotangent bundle is too strong as a hyperbolicity condition. Inspired by \cite{arapura2021perverse}, one possible adjustment is to look at the class of projective manifolds that admit a finite morphism to a manifold with nef cotangent bundle. Since the pushforward of a perverse sheaf under a finite morphism is still perverse, by \cite[Proposition 3.6]{LiuMaximWang+2021} any perverse sheaf on such manifold has nonnegative Euler characteristic. In \cite{arapura2021perverse}, the authors proved the same property holds for projective varieties that admit a complex variation of Hodge structures with finite period map. This result also indicates that admitting a finite morphism to a projective manifold with nef cotangent bundle is a more appropriate hyperbolicity condition.

\printbibliography

@Inbook{Lazarsfeld2004,
author="Lazarsfeld, Robert",
title="Vanishing Theorems",
bookTitle="Positivity in Algebraic Geometry I: Classical Setting: Line Bundles and Linear Series",
year="2004",
publisher="Springer Berlin Heidelberg",
address="Berlin, Heidelberg",
pages="239--267",
abstract="This chapter is devoted to the basic vanishing theorems for integral divisors. The prototype is Kodaira's result that if A is an ample divisor on a smooth complex projective variety X, then OX(KX+ A) has vanishing higher cohomology. An important extension, due to Kawamata and Viehweg, asserts that the statement remains true assuming only that A is nef and big. These and related vanishing theorems have a vast number of applications to questions central to the focus of this book.",
}

@Inbook{Lazarsfeld2004_2,
author="Lazarsfeld, Robert",
title="Ample and Nef Vector Bundles",
bookTitle="Positivity in Algebraic Geometry II: Positivity for Vector Bundles, and Multiplier Ideals",
year="2004",
publisher="Springer Berlin Heidelberg",
address="Berlin, Heidelberg",
pages="7--64",
abstract="This chapter is devoted to the basic theory of ample and nef vector bundles. We start in Section 6.1 with the ``classical'' material from [274]. In Section 6.2 we develop a formalism for twisting bundles by Q-divisor classes, which is used to study nefness. The development parallels - and for the most part reduces to - the corresponding theory for line bundles. The next two sections constitute the heart of the chapter. In the extended Section 6.3 we present numerous examples of positive bundles arising ``in nature,'' as well as some methods of construction. Finally, we study in Section 6.4 the situation on curves, where there is a close connection between amplitude and stability: following Gieseker [224] one obtains along the way an elementary proof of the tensorial properties of semistability for bundles on curves.",

}

@Inbook{Grauert2004,
author="Grauert, Hans
and Remmert, Reinhold",
title="Applications of Theorems A and B",
bookTitle="Theory of Stein Spaces",
year="2004",
publisher="Springer Berlin Heidelberg",
address="Berlin, Heidelberg",
pages="125--185",
abstract="In this chapter we give some of the significant classical applications of Theorems A and B. Along with the Cousin problems and the Poincar{\'e} problem, we carry out a detailed discussion of Stein algebras. We also become quite involved with the problem of topologizing the module of sections S(X) of a coherent sheaf.",

}

@Article{Stein1956,
author={Stein, Karl},
title={{\"U}berlagerungen holomorph-vollst{\"a}ndiger komplexer R{\"a}ume},
journal={Archiv der Mathematik},
year={1956},
month={Dec},
day={01},
volume={7},
number={5},
pages={354-361},

}

@article{kratz1997compact,
  title={Compact complex manifolds with numerically effective cotangent bundles},
  author={Kratz, Henrik},
  journal={Doc. Math},
  volume={2},
  pages={183--193},
  year={1997}
}

@article{LiuMaximWang+2021,
author = {Yongqiang Liu and Laurenţiu Maxim and Botong Wang},

title = {Aspherical manifolds, Mellin transformation and a question of Bobadilla–Kollár},
journal = {Journal für die reine und angewandte Mathematik},
number = {},
volume = {},
year = {2021},
}

@Article{Sommese1984,
author={Sommese, Andrew John},
title={On the density of ratios of Chern numbers of algebraic surfaces},
journal={Mathematische Annalen},
year={1984},
month={Jun},
day={01},
volume={268},
number={2},
pages={207-221},

}

@article{10.2307/2041670,

 abstract = {We construct a family of algebraic manifolds which are hyperbolic in the sense of Kobayashi, but whose tangent bundles are not negative.},
 author = {B. Wong},
 journal = {Proceedings of the American Mathematical Society},
 number = {1},
 pages = {90--92},
 publisher = {American Mathematical Society},
 title = {Negative Tangent Bundles and Hyperbolic Manifolds},
 volume = {61},
 year = {1976}
}

@article{Kollar1993,
author={Koll{\'a}r, J{\'a}nos},
title={Shafarevich maps and plurigenera of algebraic varieties},
journal={Inventiones mathematicae},
year={1993},
month={Dec},
day={01},
volume={113},
number={1},
pages={177-215},

}

@misc{arapura2021perverse,
      title={Perverse sheaves on varieties with large fundamental groups}, 
      author={Donu Arapura and Botong Wang},
      year={2021},
      eprint={2109.07887},
      archivePrefix={arXiv},
      primaryClass={math.AG}
}

\end{document}